\newtheorem{thm}{Theorem}
\newtheorem{prop}[thm]{Proposition}
\newtheorem{lemma}[thm]{Lemma}
\theoremstyle{definition}
\newtheorem{defi}[thm]{Definition}
\theoremstyle{remark}
\newtheorem{rem}[thm]{Remark}
\newcommand{\alg}[1]{\mathfrak{{#1}}}
\newcommand{\skp}[2]{\left\langle {#1}, {#2} \right\rangle} 
\newcommand{\co}[2]{\left[{#1},{#2}\right]} 
\newcommand{\eref}[1]{(\ref{#1})} 
\newcommand{\morphU}{{\mathcal{U} }}
\newcommand{\mU}{{\morphU }}
\newcommand{\mV}{{\mathcal{V} }}
\newcommand{\p}{\partial}
\newcommand{\End}{\mathop{End}}
\newcommand{\C}{{\mathbb{C}}}
\newcommand{\Z}{{\mathbb{Z}}}
\DeclareMathOperator{\dv}{div}
\newcommand{\ndash}{\nobreakdash-\hspace{0pt}}
\begin{document}
\title{On $L_\infty$-morphisms of cyclic chains}

\author{Alberto S. Cattaneo}
\author{Giovanni Felder}
\author{Thomas Willwacher}


\thanks{This work been partially supported by SNF Grants
200020-121640/1 and 200020-105450, by the European Union through the FP6 Marie
Curie RTN ENIGMA (contract number MRTN-CT-2004-5652), and by the
European Science Foundation through the MISGAM program.}
\subjclass[2000]{16E45; 53D55; 53C15; 18G55}
\date{}
\keywords{Formality, Cyclic cohomology, Deformation quantization}

\begin{abstract}
Recently the first two authors \cite{cafe} constructed
an $L_\infty$-morphism using the $S^1$-equivariant version
of the Poisson Sigma Model (PSM). Its role in deformation
quantization was not entirely clear. We give here a
``good'' interpretation and show that the resulting
formality statement is equivalent to formality on
cyclic chains as conjectured by Tsygan and proved
recently by several authors \cite{dtt}, \cite{me}.

\end{abstract}
\maketitle

\section{Introduction and Structure}
We begin by drawing the big picture; precise
definitions will be given below.
\subsection{Big picture on cochains}
Let $M$ be a smooth $d$-dimensional manifold
and $A=C^\infty(M)$ ($A_c=C^\infty_c(M)$) the
commutative algebras of smooth (compactly
supported) functions. We denote by $T^\bullet$
the dgla of multivector fields and by $C^\bullet(A)$
the multidifferential Hochschild complex.
Kontsevich's famous Formality Theorem asserts
that there is an $L_\infty$-quasi\ndash isomorphism
of dglas
\[
\mU_K \colon T^\bullet \to C^\bullet.
\]
Next, assume that $M$ is orientable\footnote{This is
actually not necessary, but we will assume it for
simplicity.} and pick a volume form $\Omega$. This
endows $T^\bullet$ with an additional differential
$\dv_\Omega$, the divergence, that is compatible
with the Schouten bracket on $T^\bullet$. We will
denote the dgla
$(T^\bullet[[u]],u\dv_\Omega,\co{\cdot}{\cdot}_S)$
shortly by $T^\bullet[[u]]$. Here $u$ is a formal
parameter of degree +2. There is a morphism of dglas
\[
 T^\bullet[[u]]\stackrel{u=0}{\longrightarrow}T^\bullet.
\]
We denote the composition of this morphism with
$\mU_K$ also by $\mU_K$ for simplicity.

\subsection{Big picture on chains}
Let us turn to homology. Denote the negatively graded
Hochschild (chain) complex by $C_\bullet(A)=C_\bullet(A,A)$.
It is a mixed complex, with the Hochschild differential
$b$ of degree +1 and with the Rinehart (or Connes)
differential $B$ of degree -1. The cohomology $H_\bullet(A)$
of $C_\bullet(A)$ wrt. the differential $b$ is the de
Rham complex $(\Omega^{-\bullet}(M),{\rm d})$, which we
view as a bicomplex with vanishing first differential.

$C_\bullet(A)$ also carries a compatible dgla module
structure over the Hochschild cochains $C^\bullet(A)$.
Pulling back this module structure along $\mU_K$, we
obtain an $L_\infty$-module structure over multivector
fields $T^\bullet$. The Hochschild Formality Theorem
on chains \cite{shoikhetchain,dolgushev,tsygan} states
that there is a quasi\ndash isomorphism of $L_\infty$-modules
over $T^\bullet$
\[
\mV: C_\bullet(A) \to \Omega^{-\bullet}(M)
\]
Actually, this morphism is compatible with the additional
second differentials $B$ and ${\rm d}$ on both sides.
Hence we obtain an $L_\infty$-quasi\ndash isomorphism
\[
\mV: (C_\bullet(A)[[u]],b+uB) \to (\Omega^{-\bullet}(M)[[u]], u{\rm d})
\]
This last statement is known as the Cyclic Formality
Theorem on chains \cite{me,dtt,tsygan}.

\subsection{Dual picture}
Recall that $A=C^\infty(M)$. The following statement
is a particularly simple case of van den Bergh duality
(note the negative grading on the left)
\[
 H_\bullet(A,A) \cong H^{d+\bullet}(A, \Omega^d(M))
\]
Concretely, the left hand side is $\Omega^{-\bullet}(M)$,
and the right hand side is
$VT^{d+\bullet}:=T^{d+\bullet}\otimes \Omega^d(M)$.
The isomorphism from right to left is by contraction.
Note that we can pull back the de Rham differential
along this isomorphism, obtaining a differential ``$\dv$''
on $VT^\bullet$. Note in particular that this differential
$\dv$ does not depend on a choice of volume, in contrast
to the $\dv_\Omega$ defined before.

The dualized Hochschild formality theorem on chains
states that there is a quasi\ndash isomorphism of $L_\infty$-modules
\[
\mV^* : VT^\bullet \to C^{\bullet}(A,\Omega^d)\, .
\]
The dualized cyclic formality theorem states that this
morphism is compatible with the additional differentials
$\dv$ on the left and the (adjoint of the) Connes differential $B$ on the right.

We will only consider such morphisms that are differential
operators in each argument. In this case there is a canonical
way to obtain an adjoint morphism $\mV^*$ from the ``direct''
one $\mV$ and vice versa. Concretely, there is a pairing
between $C^\bullet(A,\Omega^d)$ and $C_\bullet(A_c)$ given by
\[
 \skp{\phi}{a_0\otimes\cdots \otimes a_n} = \int_M a_0 \phi(a_1,\dots,a_n)
\]
and a pairing between $VT^\bullet(M)$ and $\Omega^\bullet(M)$ given by
\[
 \skp{\gamma \Omega}{\alpha} = \int_M (\iota_\gamma \alpha) \Omega \, .
\]
Here the insertion $\iota_\gamma$ is defined such that $\iota_{\gamma_1\wedge \gamma_2}=\iota_{\gamma_1}\iota_{\gamma_2}$.
One can see that to any direct multidifferential $L_\infty$
morphism $\mV$ there is a unique morphism $\mV^*$ such that
\[
\skp{\gamma \Omega}{\mV(a_0\otimes\cdots \otimes a_n)}=\pm \skp{\mV^*(\gamma \Omega)}{a_0\otimes\cdots \otimes a_n}\, .
\]
It follows that the direct and adjoint (multidifferential)
formality statements are equivalent.

\begin{rem}[on quantization]
The cohomology $H^0(A,\Omega^d)$ is important because it
classifies smooth traces on $A_c$, i.e., top degree
differential forms $\Omega$ such that the functional
$f\mapsto \int_Mf\Omega$ is a trace on $A_c$. Of course,
in the current commutative setting, these are just all
top degree differential forms. However, due to dual
Hochschild formality we can quantize. Let $A_\star$
be the algebra $C^\infty(M)[[\hbar]]$ with
the Kontsevich star product \cite{kontsevich} associated to a Poisson structure $\pi$. The relevant
cohomology is then
$H^0(A_\star,\Omega^d_\star)\cong \{\omega\in\Omega^d(M)[[\hbar]]\mid \dv_\omega \pi=0 \}$.
The quantized bimodule structure on $\Omega^d_\star=\Omega^d[[\hbar]]$
is defined such that for all $a,b\in A_c$, $\omega \in \Omega^d_\star$
\begin{align*}
 \int_M a \cdot (L_b\omega) = \int_M (a\star b) \cdot \omega = \int_M b \cdot (R_a \omega)\, .
\end{align*}
\end{rem}

\subsection{Other module structures}
The cyclic chain formality morphisms above are
quasi\ndash isomorphisms of $L_\infty$-modules over
$(T^\bullet(M),0,\co{\cdot}{\cdot}_S)$. One may
be tempted to replace this latter dgla by its
``cyclic'' counterpart
$(T^\bullet(M)[[u]],u\dv_\Omega,\co{\cdot}{\cdot}_S)$,
and ask whether the above formality statements remain
true. Of course, if we use the module structures obtained
via pulling back along the dgla morphism
\[
T^\bullet[[u]] \stackrel{u=0}{\to} T^\bullet
\]
the new formality statements will be equivalent to
the original ones. However, one may try to change
the module structures. We will only consider changing
the module structure on the classical (differential forms)
side.\footnote{One can also ``naturally'' change
the action on the Hochschild side. but we don't
discuss it here.} We show in section \ref{sec:diffforms} that
there is a whole family of dgla actions $L^{(t)}$
reducing to the original Lie derivative action for $t=0$.
However, all these module structures will be shown
to be $L_\infty$-quasi-isomorphic in Proposition \ref{prop:Ltqiso}.

\subsection{Meaning of the PSM morphism}
Using the $S^1$-equivariant version of the Poisson
Sigma Model the first two authors \cite{cafe} recently
constructed an $L_\infty$-morphism $\mV_{PSM,orig}$,
the ``PSM morphism''. This paper is devoted to clarifying
the meaning of this morphism, which was not entirely
clear. To do this, we will reinterpret $\mV_{PSM,orig}$
slightly, yielding a morphism $\mV_{PSM}^*$.
Concretely, we introduce a new complex $E^\bullet$
which is quasi-isomorphic (as bicomplex and
$C^\bullet(A)$-module) to $C^\bullet(A, \Omega^d)$.
The morphism $\mV_{PSM}^*$ can then be understood as
an adjoint cyclic chain formality morphism on
\[
\mV^*_{PSM}: T^\bullet[[u]]\cong VT^\bullet[[u]] \to E^\bullet[[u]]\, .
\]
Here the action of $T^\bullet[[u]]$ on the very left
is the adjoint action, on the middle it is the (dual
of the) action $L^{(1)}$, and on the
right it is the action defined through pullback via
$\mU_K$. The isomorphism on the left is defined by
choosing a volume form.

\subsection{Organisation of the paper}
The remainder of the paper is divided into two parts:
\begin{enumerate}
\item In the first part we introduce the structures
involved, i.e., the Hochschild and cyclic chain and
cochain complexes. Here there are two novel aspects:
(i) We introduce the natural ``extended'' complex
$E^\bullet$ mentioned above that allows us to give
a nice interpretation of the PSM morphism and (ii)
we introduce the aforementioned family $L^{(t)}$ of
$T^\bullet[[u]]$-actions on differential forms that
was (to our knowledge) not studied before.

\item In the second part we define $\mV_{PSM}^*$ and
prove the formality statement made above.
\end{enumerate}

\section{Part I: The objects of study}
In this section we define the different complexes that
will be related to each other through formality morphisms.
 Each complex can either constitute a differential
graded Lie algebra (dgla) or serve as a module over
one of the dglas. We will indicate the roles in the
titles of each subsection. Of course, every dgla is
also a module over itself.

\subsection{Multivector fields $T^\bullet$ (dgla)}
The algebra of multivector fields on $M$, $T^\bullet(M)$,
is the algebra of smooth sections of $\wedge^\bullet TM$.
There is a Lie bracket $\co{\cdot}{\cdot}_{S}$ on $T^{\bullet+1}(M)$,
the Schouten bracket, extending the Lie derivative and
making $T^\bullet(M)$ a Gerstenhaber algebra. More concretely,
\begin{multline*}
\co{v_1\wedge\cdots \wedge v_m}{w_1\wedge\cdots \wedge w_n}_{S}
=\\=
\sum_{i=0}^n\sum_{j=0}^n (-1)^{i+j}\co{v_i}{w_j}\wedge v_1\wedge \cdots \wedge\hat{v}_i\wedge\cdots \wedge v_m\wedge w_1\wedge\cdots \wedge \hat{w}_j\wedge\cdots \wedge w_n\, .
\end{multline*}

Assume now that $M$ is oriented, with volume form $\Omega$.
Contraction with $\Omega$ defines an isomorphism
$T^\bullet(M)\to \Omega^{d-\bullet}(M)$. The
divergence operator $\dv_\Omega$ on $T^\bullet(M)$
is defined as the pull-back of the de Rham
differential ${\rm d}$ on $\Omega^{\bullet}(M)$
under this isomorphism. Concretely
\[
 \iota_{\dv_\Omega \gamma}\Omega = {\rm d}\iota_\gamma \Omega\, .
\]
One can check that $\dv_\Omega$
is a derivation with respect to the Schouten bracket,
i.e.,\footnote{Actually $\dv_\Omega$ is a BV operator
generating $\co{\cdot}{\cdot}_S$ for any volume form $\Omega$.}
\[
\dv_\Omega \co{\gamma_1}{\gamma_2}_{S} = \co{\dv_\Omega\gamma_1}{\gamma_2}_{S}+ (-1)^{k_1-1}\co{\gamma_1}{\dv_\Omega \gamma_2}_{S}\, .
\]
Introducing a new formal variable $u$ of degree +2,
the complex $T^{\bullet+1}(M)[[u]]$ is a dgla with
differential $u\dv_\Omega$ and bracket the $u$-linear
extension of the Schouten bracket.

Hence we have two dglas, $T^{\bullet+1}(M)$ and
$T^{\bullet+1}(M)[[u]]$, related by a dgla morphism
\[
T^{\bullet+1}(M)[[u]] \stackrel{u=0}{\longrightarrow} T^{\bullet+1}(M).
\]
This morphism in particular allows us to view
any $T^{\bullet+1}(M)$-module also as $T^{\bullet+1}(M)[[u]]$-module.

\subsection{Hochschild cochains $C^\bullet(A)$ (dgla)}
The normalized multidifferential Hochschild complex
$C^\bullet(A)$ is the complex of $\bullet$-differential
operators, which vanish upon insertion of a constant
function in any of its arguments. E.g., $C^1(M)$ are
differential operators $D$ such that $D1=0$. $C^{\bullet+1}(A)$
is a differential graded Lie algebra with the Gerstenhaber bracket
\begin{align*}
[\phi,\psi]_G(a_1,\dots , a_{p+q-1})
&= \phi(\psi(a_1,\dots,a_q),a_{q+1},\dots, a_{p+q-1}) \\
&\quad\quad +(-1)^{q-1}\phi(a_1,\psi(a_2,\dots,a_{q+1}),a_{q+2},\dots, a_{p+q-1}) \\
&\quad\quad \pm \dots \\
&\quad\quad +(-1)^{(p-1)(q-1)} \phi(a_1,\dots,\psi(a_p,\dots,a_{p+q-1})) \\
&\quad\quad - (-1)^{(p-1)(q-1)} (\phi \leftrightarrow \psi)
\end{align*}
for $\phi \in C^p(A), \psi \in C^q(A)$, and the Hochschild differential
\[
b^H = \co{m_0}{\cdot}_G.
\]
Here $m_0\in C^2(A)$ is the usual (commutative) multiplication of functions.

\subsection{The differential forms $\Omega^\bullet(M)$ (module)}
\label{sec:diffforms}
Let $\Omega^\bullet=\Omega^\bullet(M)$ be the graded
algebra of differential forms on $M$, with negative
grading. Let ${\rm d}={\rm d}_{dR}$ be the de Rham
differential. Denote the insertion operators by
$\iota_\gamma$. They take a form and contract it
with the multivector field $\gamma$. The signs
are such that
\begin{align*}
 \iota: T^\bullet &\to \End(\Omega^\bullet) \\
\gamma &\mapsto \iota_\gamma
\end{align*}
is a morphism of graded algebras. For example, for
a function $f$, $\iota_f$ is multiplication by $f$,
for a vector field $\xi$, $\iota_\xi$ is a derivation
of the dga $\Omega^\bullet$ and for any multivector
fields $\gamma$, $\nu$,
$\iota_{\gamma\wedge \nu}=\iota_\gamma \iota_\nu$.
The Lie derivative $L$ is:
\[
L_\gamma := \co{{\rm d}}{\iota_\gamma} \, .
\]
It satisfies the following relation, which can
alternatively be taken as the definition of the
Schouten bracket.
\[
 \iota_{\co{\gamma}{\nu}_S} = \co{\iota_\gamma}{L_\nu} = (-1)^{|\gamma|}\co{L_\gamma}{\iota_\nu}
\]
It follows that $L$ forms a representation of the differential
graded Lie algebra $T^{\bullet+1}$.
Here and everywhere in the paper the degrees $|\gamma|$
are such that $\gamma\in T^{|\gamma|+1}$.

Next consider module structures on $(\Omega^\bullet[[u]], u {\rm d})$
over the dgla $(T^\bullet[[u]],\co{\cdot}{\cdot}_{S},u \dv_\Omega)$.
Let us introduce a family of actions $L_\gamma^{(t)}$
as follows. Let $S^{(t)}$ be the $u$-scaling operation on
multivector fields given by
\[
S^{(t)} \gamma = S^{(t)} \left( \sum_{j\geq 0} u^j \gamma_j \right) = \sum_{j\geq 0} (tu)^j \gamma_j\, .
\]
Let further
\[
\iota^{(t)}_\gamma = \iota_{S^{(t)}\gamma}\, .
\]
The family of dgla actions is then given by
\[
L_\gamma^{(t)}
= (1/u)(\co{u{\rm d}}{\iota^{(t)}_\gamma} + \iota^{(t)}_{u \dv_\Omega \gamma})
= \sum_{j\geq 0} (ut)^j (L_{\gamma_j} + t \iota_{\dv_\Omega \gamma_j})
\]
where $\gamma = \sum_{j\geq 0} u^j \gamma_j \in  T^\bullet[[u]]$.
\begin{prop}
\label{prop:Ltqiso}
For any $t\in \C$, $L_\gamma^{(t)}$ defines a
dgla module structure on $\Omega^\bullet[[u]]$.
Furthermore all these module structures are
$L_\infty$-isomorphic to each other.
\end{prop}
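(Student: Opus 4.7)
The plan has two parts corresponding to the two assertions.

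For the dgla module axiom, I would first rewrite the action in the more compact form
\[
 L^{(t)}_\gamma = L_{S^{(t)}\gamma} + t\,\iota_{\dv_\Omega S^{(t)}\gamma},
\]
where $L$ and $\iota$ are extended $\C[[u]]$-linearly. Compatibility with the differential, $\co{u{\rm d}}{L^{(t)}_\gamma} = L^{(t)}_{u\dv_\Omega \gamma}$, then follows immediately from $(u{\rm d})^2 = 0 = \dv_\Omega^2$ applied to the defining formula $u L^{(t)}_\gamma = \co{u{\rm d}}{\iota^{(t)}_\gamma} + \iota^{(t)}_{u\dv_\Omega \gamma}$. Compatibility with the Schouten bracket, $\co{L^{(t)}_\gamma}{L^{(t)}_\nu} = L^{(t)}_{\co{\gamma}{\nu}_S}$, is a direct expansion using the Cartan identities $\co{\iota_\alpha}{L_\beta} = \iota_{\co{\alpha}{\beta}_S}$ and $\co{L_\alpha}{L_\beta} = L_{\co{\alpha}{\beta}_S}$ already given in the text, the vanishing $\co{\iota_\alpha}{\iota_\beta}=0$, the BV property that $\dv_\Omega$ is a derivation of $\co{\cdot}{\cdot}_S$, and the observation that $S^{(t)}\colon u\mapsto tu$ is a $\C$-algebra endomorphism of $\C[[u]]$ and therefore intertwines the (bilinear) Schouten bracket, $S^{(t)}\co{\gamma}{\nu}_S = \co{S^{(t)}\gamma}{S^{(t)}\nu}_S$. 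Grouping terms by total power of $t$ isolates three pieces: the $t^0$-part is the classical Lie action identity for $L^{(0)}$, the $t^1$-part is precisely the BV derivation of $\dv_\Omega$, and the $t^2$-part vanishes by $\co{\iota_\alpha}{\iota_\beta}=0$.

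For the $L_\infty$-isomorphism claim, the plan is a Moser/integration argument along the family $t\mapsto L^{(t)}$. Differentiating and using the formula above, a short computation yields
\[
 \dot L^{(t)}_\gamma = \co{u{\rm d}}{\eta^{(t)}_\gamma} + \eta^{(t)}_{u\dv_\Omega\gamma},\qquad \eta^{(t)}_\gamma := \tfrac{1}{u}\,\iota_{\partial_t S^{(t)}\gamma},
\]
which is well-defined because $\partial_t S^{(t)}\gamma = \sum_{j\geq 1} j\,t^{j-1}u^j\gamma_j$ has no $u^0$-term. So $\dot L^{(t)}$ is the Chevalley--Eilenberg coboundary of the 1-cochain $\eta^{(t)}\in \Hom(T^\bullet[[u]], \End(\Omega^\bullet[[u]]))$. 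A further computation, using the same Cartan/BV identities and the $S^{(t)}$-equivariance of the Schouten bracket, shows that $\eta^{(t)}$ satisfies the analogous cocycle equation at two arguments as well, i.e.\ it is a genuine CE 1-cocycle compatible with the $L^{(t)}$-module structure.

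Given this exact-cocycle statement, one integrates the standard gauge flow on $L_\infty$-module morphisms driven by $\eta^{(t)}$ with initial condition $\Phi^{(0)} = \mathrm{id}$, producing a family of $L_\infty$-isomorphisms $\Phi^{(t)}\colon (\Omega^\bullet[[u]], L^{(0)}) \to (\Omega^\bullet[[u]], L^{(t)})$ whose Taylor coefficients are generated recursively. Since $\eta^{(t)}$ is polynomial in $t$ at each fixed $u$-order, the ODE has a unique solution and no convergence issues arise. The main obstacle I foresee is the bookkeeping verification that this recursive solution indeed satisfies the full tower of $L_\infty$-iso relations at all Chevalley--Eilenberg degrees; however, each such relation reduces once more to the same package of Cartan identities, BV derivation of $\dv_\Omega$, and $S^{(t)}$-equivariance of the Schouten bracket, organised by the bidegree in $t$ and $u$, so the argument should close cleanly.
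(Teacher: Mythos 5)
Your proposal is correct and follows essentially the same route as the paper: direct verification of the two dgla-module identities via the Cartan identities and the derivation property of $\dv_\Omega$, followed by a Moser-type integration of the infinitesimal morphism given by contraction, your $\eta^{(t)}_\gamma=\tfrac1u\,\iota_{\partial_tS^{(t)}\gamma}$ being exactly the paper's $h^{(t)}_\gamma=-\tfrac1u\tfrac{d}{dt}\iota^{(t)}_\gamma$ up to sign. The only cosmetic differences are that you organise the computation through the substitution endomorphism $S^{(t)}$ instead of expanding in components, and that the paper records the integrated morphism explicitly as $H^{(1)}=e^{\pm\iota^+/u}$ rather than appealing to the general gauge-flow argument.
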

\begin{proof}
To show that the $L_\gamma^{(t)}$ are indeed dgla actions, compute
\begin{align*}
\co{ u{\rm d} }{ L_\gamma^{(t)} }
&=  \sum_{j\geq 0} (ut)^j t \co{ u{\rm d} }{ \iota_{\dv_\Omega \gamma_j} } \\
&= \sum_{j\geq 0} (ut)^{j+1} L_{\dv_\Omega \gamma_j} = L_{ u\dv_\Omega \gamma^{(t)} }\, .
\end{align*}
Furthermore
\begin{align*}
\co{ L_\gamma^{(t)} }{ L_\nu^{(t)} }
&= \sum_{j,k\geq 0} (ut)^{j+k} \co{L_{\gamma_j} + t \iota_{\dv_\Omega \gamma_j}}{L_{\nu_k} + t \iota_{\dv_\Omega \nu_k}} \\
&= \sum_{j,k\geq 0} (ut)^{j+k} \left(
L_{\co{\gamma_j}{\nu_k}} + t (-1)^{|\gamma_j|} \iota_{ \co{\gamma_j}{\dv_\Omega \nu_k} } +t\iota_{ \co{\dv_\Omega \gamma_j}{\nu_k} } \right) \\
&= \sum_{j,k\geq 0} (ut)^{j+k} \left(
L_{\co{\gamma_j}{\nu_k}} + t \iota_{\dv_\Omega \co{\gamma_j}{\nu_k} }\right) \\
&= L_{\co{\gamma}{\nu}}^{(t)}\, .
\end{align*}

Next we construct $L_\infty$ morphisms $H^{(t)}$
by integrating infinitesimal morphisms $h^{(t)}$.
Here the $h^{(t)}$ have only a single non-vanishing
Taylor coefficient of degree one, which we
denote (admittedly slightly confusing) by
\[
h^{(t)}_1(\gamma;\alpha) = -(-1)^{|\gamma|} h^{(t)}_\gamma \alpha\, .
\]
One finds that the (infinitesimal) $L_\infty$-morphism property
is equivalent to the following two conditions
for $h^{(t)}_\gamma$.
\begin{align*}
 -\frac{d}{dt}L^{(t)}_\gamma &= \co{u \rm d}{h^{(t)}_\gamma} + h^{(t)}_{u \dv_\Omega \gamma} \\
 h^{(t)}_{\co{\gamma}{\nu}_S}
&= \co{h^{(t)}_\gamma}{L^{(t)}_\nu} +(-1)^{|\gamma|}\co{L^{(t)}_\gamma}{h^{(t)}_\gamma}
\end{align*}

We claim that that
\[
 h^{(t)}_\gamma = -\frac{1}{u}\frac{d}{dt}\iota^{(t)}_\gamma
\]
satisfies these equations.\footnote{Note that
the expression on the right is well defined
since $\frac{d}{dt}\iota^{(t)}_\gamma \sim O(u)$.}
Compute
\begin{align*}
 \frac{d}{dt}L_\gamma^{(t)} &=
 (1/u)\co{u{\rm d}}{\frac{d}{dt}\iota^{(t)}_\gamma }
 + (1/u)\frac{d}{dt}\iota^{(t)}_{u \dv_\Omega \gamma} \\
&= - \co{u{\rm d}}{h^{(t)}_\gamma}
-h^{(t)}_{u\dv_\Omega \gamma}\, .
\end{align*}
In second order
\begin{align*}
\co{h^{(t)}_\gamma}{L^{(t)}_\nu} + (-1)^{|\gamma|} \co{L^{(t)}_\gamma}{h^{(t)}_\nu}
&=
-\sum_{j,k}(tu)^{j+k}\left( (j/t)\iota_{\co{\gamma_j}{\nu_k}}+(k/t)\iota_{\co{\gamma_j}{\nu_k}} \right)\\
&=
-\frac{d}{dt}\sum_{j,k}(tu)^{j+k}\iota_{\co{\gamma_j}{\nu_k}}\\
&=h^{(t)}_{\co{\gamma}{\nu}}  \, .
\end{align*}
\end{proof}

In the special case $t=0$ the action becomes
\[
L_\gamma^{(0)} \alpha = L_{\gamma_0}\alpha
\]
and in the case $t=1$
\[
L_\gamma^{(1)} \alpha = L_\gamma \alpha + \iota_{\dv_\Omega \gamma}\alpha.
\]
The quasi\ndash isomorphism between these two
structures is given by
\[
H^{(1)} = e^{\int_0^1dt h^{(t)}} = e^{\pm \iota^+ / u}
\]
where $\iota^+_\gamma = \iota^{(1)}_\gamma - \iota^{(0)}_\gamma = \sum_{j\geq 1} u^j \iota_{\gamma_j}$.

\subsection{Multivector field valued top forms
$VT^\bullet(M)$ (module)}
We define the multivector field valued top forms
\[
 VT^\bullet(M) := \Omega^d(M;\wedge^\bullet TM)\, .
\]
There is a natural non-degenerate pairing
\begin{gather*}
\skp{\cdot}{\cdot}:VT^\bullet(M)\otimes \Omega^{d-\bullet}_c(M)\to \C \\
\skp{\nu \Omega}{\alpha} = \int_M \Omega (\iota_\nu \alpha)\, .
\end{gather*}
Its obvious $u$-bilinear extension allows
for dualizing the dgla-module structures $L$ and $L^{(t)}$
on $\Omega^\bullet(M)[[u]]$ discussed above to dgla-module
structures on $VT^\bullet(M)[[u]]$.
We denote these dual module structures also by $L^{(t)}$ and hope that no
confusion arises.
Concretely, in our sign conventions the differential, temporarily called $\delta$,
and action are defined such that
\begin{align*}
 \skp{\delta(\nu \Omega)}{\alpha} &= -(-1)^{|\nu|}\skp{\nu \Omega}{u{\rm d}\alpha} \\
 \skp{L^{(t)}_\gamma(\nu \Omega)}{\alpha} &= -(-1)^{|\nu||\gamma|}\skp{\nu \Omega}{L^{(t)}_\gamma\alpha}\, .
\end{align*}

\begin{lemma}
The dgla module structure $L^{(t)}$ on $VT^\bullet(M)[[u]]$ is given
explicitly by the following data: The differential is $\delta=u\dv$ with
\[
 \dv (\nu \Omega) := (\dv_\Omega \nu)\Omega.
\]
The action is
\[
 L^{(t)}_\gamma (\nu \Omega) = \sum_{j\geq 0} (tu)^j
\left( \co{\gamma}{\nu}_S\Omega +(-1)^{|\gamma_j|}(1-t)(\dv_\Omega\gamma \wedge \nu)\Omega \right)
\]
where $\gamma=\sum_{j\geq 0} u^j \gamma_j$.
\end{lemma}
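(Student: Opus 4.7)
The plan is to unwind the dualization convention and reduce everything to the Cartan-type identities already recalled above, namely $L_\gamma = \co{{\rm d}}{\iota_\gamma}$, $\iota_{\co{\gamma}{\nu}_S} = \co{\iota_\gamma}{L_\nu}$, and $\iota_{\gamma_1\wedge\gamma_2}=\iota_{\gamma_1}\iota_{\gamma_2}$, together with $d\Omega = 0$ and Stokes' theorem on the compactly supported test form $\alpha$. Everything else is a direct computation from the defining relations $\skp{\delta(\nu\Omega)}{\alpha} = -(-1)^{|\nu|}\skp{\nu\Omega}{u\,{\rm d}\alpha}$ and $\skp{L^{(t)}_\gamma(\nu\Omega)}{\alpha} = -(-1)^{|\nu||\gamma|}\skp{\nu\Omega}{L^{(t)}_\gamma\alpha}$.

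For the differential, I would start from $\skp{\delta(\nu\Omega)}{\alpha} = -(-1)^{|\nu|}u\int_M \Omega\,\iota_\nu({\rm d}\alpha)$ and integrate by parts. The key identity is $L_\nu\Omega = \co{{\rm d}}{\iota_\nu}\Omega = {\rm d}\iota_\nu\Omega = \iota_{\dv_\Omega\nu}\Omega$, the second equality using $d\Omega=0$ and the third being the definition of $\dv_\Omega$. Combined with the top-form identity that rewrites $(\iota_\nu\alpha)\Omega$ in terms of $\alpha\wedge\iota_\nu\Omega$ (so that the de Rham differential can be moved off $\alpha$), the right-hand side collapses to $u\int_M \Omega\,\iota_{\dv_\Omega\nu}\alpha = \skp{u(\dv_\Omega\nu)\Omega}{\alpha}$, which identifies $\delta=u\dv$ as claimed.

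For the action, I would split $L^{(t)}_\gamma = \sum_{j\geq 0}(ut)^j\bigl(L_{\gamma_j} + t\,\iota_{\dv_\Omega\gamma_j}\bigr)$ and dualize the two types of summands separately. For the Lie derivative piece, I would use $\iota_\nu L_{\gamma_j} = \pm L_{\gamma_j}\iota_\nu \pm \iota_{\co{\gamma_j}{\nu}_S}$ (a reorganization of the Cartan identity for $\iota_{\co{\gamma}{\nu}_S}$) and integrate the $L_{\gamma_j}\iota_\nu\alpha$ contribution by parts; the derivative hitting $\Omega$ produces $L_{\gamma_j}\Omega = \iota_{\dv_\Omega\gamma_j}\Omega$, which together with $\iota_{\gamma_1\wedge\gamma_2}=\iota_{\gamma_1}\iota_{\gamma_2}$ generates a $\dv_\Omega\gamma_j\wedge\nu$ term alongside the desired $\co{\gamma_j}{\nu}_S$ term. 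For the insertion piece, the relation $\iota_\nu\iota_{\dv_\Omega\gamma_j} = \pm\iota_{\dv_\Omega\gamma_j\wedge\nu}$ dualizes $t\,\iota_{\dv_\Omega\gamma_j}$ directly into a $t$-weighted $\dv_\Omega\gamma_j\wedge\nu$ contribution. Combining, the two $\dv_\Omega\gamma_j\wedge\nu$ summands add with total coefficient $(1-t)$, reproducing the stated formula.

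The one real obstacle is sign bookkeeping: the shift $|\gamma| = \deg\gamma - 1$, the negative grading on $\Omega^\bullet$, the Koszul signs in the dualization convention, and those in the Cartan identities all need to be tracked consistently. Once the conventions are fixed, the entire argument reduces to a short mechanical verification with no new input beyond the identities already listed.
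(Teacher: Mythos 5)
Your proposal is correct and follows essentially the same route as the paper: dualize through the pairing, use $\int_M(\iota_\gamma\alpha)\Omega=\int_M\alpha\wedge\iota_\gamma\Omega$ together with Stokes and ${\rm d}\iota_\gamma\Omega=\iota_{\dv_\Omega\gamma}\Omega$ for the differential, and the Cartan identity plus the same integration-by-parts identity (the paper's $\int_M(L_\gamma\alpha)\Omega=-\int_M(\iota_{\dv_\Omega\gamma}\alpha)\Omega$) plus $\iota_{\gamma_1\wedge\gamma_2}=\iota_{\gamma_1}\iota_{\gamma_2}$ to produce the bracket term and the $(1-t)$-weighted $\dv_\Omega\gamma_j\wedge\nu$ term. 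One small caveat: since $L_{\gamma_j}$ is not a derivation of $\Omega^\bullet$ when $\gamma_j$ has degree greater than one, the ``derivative hits $\Omega$'' step should be justified as in the paper, via $L_{\gamma_j}=\co{{\rm d}}{\iota_{\gamma_j}}$, degree counting and Stokes, rather than a literal Leibniz rule, but this is exactly the short computation your outline reduces to.
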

\begin{proof}
 Note first that
\[
 \int_M (\iota_\gamma \alpha)\Omega=\int_M \alpha\wedge \iota_\gamma \Omega\, .
\]
It follows that
\begin{align*}
 \skp{\delta(\nu \Omega)}{\alpha}
&=
-(-1)^{|\nu|}u\int_M (\iota_\nu{\rm d}\alpha) \Omega
& &=
-(-1)^{|\nu|}u\int_M ({\rm d}\alpha) \iota_\nu \Omega \\
&=
(-1)^{|\nu|+|\alpha|}u\int_M \alpha {\rm d}\iota_\nu \Omega
& &=
u\int_M \alpha \iota_{\dv_\Omega \nu} \Omega \\
&=
u\int_M (\iota_{\dv_\Omega \nu} \alpha) \Omega
& &=
 \skp{u \dv(\nu \Omega)}{\alpha}\, .
\end{align*}
In the fourth line we used that everything is zero unless $|\alpha|=|\gamma|$.
Furthermore, note that by a small computation
\[
 \int_M (L_\gamma \alpha) \Omega = -\int_M (\iota_{\dv_\Omega \gamma}\alpha )\Omega \, .
\]
Hence we obtain
\begin{align*}
 \skp{L^{(t)}_{u^j \gamma_j}(\nu \Omega)}{\alpha}
&=
-(-1)^{|\nu||\gamma_j|}
u^j\int_M\iota_\nu \left(t^j L_{\gamma_j}\alpha+t^{j+1}\iota_{\dv_\Omega \gamma_j} \alpha\right) \Omega \\
&=
-(-1)^{|\nu||\gamma_j|}
(tu)^j
\int_M
\left(\iota_{\co{\nu}{\gamma_j}_S}
+(-1)^{(|\nu|+1)|\gamma_j|} L_{\gamma_j}\iota_\nu \alpha
\right. \\ &\qquad\qquad\qquad\qquad\qquad\qquad \left.
+(-1)^{(|\nu|+1)|\gamma_j|}t\iota_{\dv_\Omega \gamma_j\wedge \nu} \alpha\right) \Omega \\
&=
-(-1)^{|\nu||\gamma_j|}
(tu)^j
\int_M
\left(\iota_{\co{\nu}{\gamma_j}_S}
+(-1)^{(|\nu|+1)|\gamma_j|} \iota_{\dv_\Omega \gamma_j\wedge \nu} \alpha
\right. \\ &\qquad\qquad\qquad\qquad\qquad\qquad \left.
+(-1)^{(|\nu|+1)|\gamma_j|}t\iota_{\dv_\Omega \gamma_j\wedge \nu} \alpha\right) \Omega \\
&=
-(-1)^{|\nu||\gamma_j|}(tu)^j
\int_M
\left(
-(-1){|\nu||\gamma_j|}\iota_{\co{\gamma_j}{\nu}_S}
-(-1)^{(|\nu|+1)|\gamma_j|} \iota_{\dv_\Omega \gamma_j\wedge \nu} \alpha
\right. \\ &\qquad\qquad\qquad\qquad\qquad\qquad \left.
+(-1)^{(|\nu|+1)|\gamma_j|}t\iota_{\dv_\Omega \gamma_j\wedge \nu} \alpha\right) \Omega \\
&=
\skp{(tu)^j(\co{\gamma}{\nu}_S+(-1)^{|\gamma_j|}(1-t) \dv_\Omega \gamma_j\wedge \nu)\Omega}{\alpha} \, .
\end{align*}

\end{proof}

In view of the PSM morphism, the most interesting
case is $t=1$. Here the action is the pushforward
of the adjoint action along the isomorphism
\begin{align*}
 T^\bullet(M)[[u]] &\to VT^\bullet(M)[[u]] \\
\gamma &\mapsto \gamma \otimes \Omega \, .
\end{align*}

\subsection{The Hochschild chains (module)}
The (normalized) Hochschild chain complex of the
algebra $A$ is the complex
\[
C_{-\bullet}(A) = A\otimes \bar{A}^{\otimes \bullet}
\]
where $\bar{A}=A/\C\cdot 1$.
It is equipped with differential $b_H$
\[
b_H(a_0\otimes \cdots \otimes a_n) = a_0a_1\otimes a_2\otimes \cdots \otimes a_n \pm \dots +(-1)^n a_na_0\otimes a_1\otimes \cdots \otimes a_{n-1}.
\]
The normalized Hochschild cochain complex acts on
the normalized chain complex through the (dgla) action
\begin{multline}
\label{equ:hochaction}
L_D (a_0\otimes \cdots \otimes a_n) =
\sum_{j=n-d+1}^{n}(-1)^{n(j+1)}  D(a_{j+1}, \dots, a_0, \dots) \otimes a_{d+j-n}\otimes \cdots \otimes a_j
+\\
+\sum_{i=0}^{n-d}(-1)^{(d-1)(i+1)}a_0 \otimes \cdots \otimes a_i \otimes D(a_{i+1},\dots, a_{i+d}) \otimes \cdots \otimes a_n.
\end{multline}
In particular $b_H=L_{m_0}$.

\subsection{The cyclic chains (module)}
The normalized Hochschild chain complex is equipped
with an additional differential $B$ of degree -1
discovered by Rinehart and rediscovered by Connes.
\[
 B(a_0\otimes \cdots\otimes a_n)
=
\sum_{j=0}^n (-1)^{jn} 1\otimes a_j\otimes \cdots \otimes a_n\otimes a_0\otimes \cdots \otimes a_{j-1}
\]
One can check that this differential (graded) commutes
with the action \eref{equ:hochaction} above, and
hence anticommutes with $b_H$. Introducing an
additional formal variable $u$ of degree +2, one
defines the negative cyclic chain complex as
\[
(C_{\bullet}(A)[[u]], b_H+uB).
\]
Its homology is called the negative cyclic homology.
Other cyclic homology theories can be obtained
from the negative cyclic complex by tensoring with
an appropriate $\C[u]$-module and will not receive
specialized treatment in this paper.

\subsection{Hochschild complex -- sheaf version $E^\bullet$ (module)}
Consider the sheaf $D^\bullet(M)$ of $\bullet$-differential
operators. E.g., $D^1(M)$ is the sheaf of differential
operators. It is a complex with the Hochschild
differential\footnote{Note that this is not the $b_H$
from above, there is no $a_0 \Phi(a_1,a_2,\dots,a_n)$-term. }
\[
(b \Phi)(a_0,\dots,a_n) = \Phi(a_0 a_1,a_2,\dots,a_n) \pm \Phi(a_0, a_1,\dots,a_{n-1}a_n) \pm \Phi(a_n a_0,a_1,\dots,a_{n-1})
\]
Also, note that there is an action of the cyclic
group(oid) on $D^\bullet(M)$ generated by
\[
(\sigma \Phi)(a_0,\dots,a_n) = (-1)^n \Phi(a_1,a_2,\dots,a_n, a_0).
\]

There is a canonical flat connection $\nabla$ on
$D^\bullet(M)$, compatible with the differential
and the cyclic action. It is given by the de
Rham differential:
\[
(\nabla \Phi)(a_0,\dots,a_n) = {\rm d} (\Phi(a_0,\dots,a_n)).
\]

\begin{defi}
The extended Hochschild cochain complex is the
total complex
\[
 E^\bullet=(\Gamma(D^\bullet(M)\otimes_{C^\infty(M)} \Omega^\bullet(M)), b + \nabla)\, .
\]
The normalized extended Hochschild complex
$E^\bullet_{norm}$ is the subcomplex of
multidifferential operators $\Phi$ such that
\[
\Phi(a_0,\dots ,a_{j-1},1,a_{j+1},\dots,a_n) = 0
\]
for all $a_0,\dots a_n$ and all $j=1,..,n$.
\end{defi}

This complex $E^\bullet$ is just another complex
computing Hochschild cohomology with values in
$\Omega^d(M)$, as the following proposition shows.

\begin{prop}
\label{prop:Eqiso}
The embedding $C^\bullet(A,\Omega^d(M)) \to E^\bullet$ given by
\[
\Phi \mapsto ((a_0,\dots, a_n)\mapsto a_0\Phi(a_1,...,a_n) )
\]
is a quasi\ndash isomorphism. 
\end{prop}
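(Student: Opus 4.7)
The plan is to check that the map is a chain map and then establish that it is a quasi-isomorphism via a spectral sequence / sheaf-theoretic argument. First I verify that $\iota\colon \Phi\mapsto \widetilde\Phi$, with $\widetilde\Phi(a_0,\ldots,a_n) = a_0\Phi(a_1,\ldots,a_n)$, is a chain map of total complexes. Since $\widetilde\Phi$ takes values in $\Omega^d$, the de Rham piece $\nabla\widetilde\Phi$ lands in $\Omega^{d+1}=0$ and so vanishes identically. A direct calculation then shows that $b\widetilde\Phi$ reproduces the usual Hochschild differential: using commutativity of $A$, the cyclic wrap-around term $\widetilde\Phi(a_{n+1}a_0,a_1,\ldots,a_n) = a_0 a_{n+1}\Phi(a_1,\ldots,a_n)$ combines with the internal merge terms to yield $a_0 \cdot (\delta\Phi)(a_1,\ldots,a_{n+1})$, where $\delta$ is the Hochschild differential on $C^\bullet(A,\Omega^d)$ with $\Omega^d$ regarded as a symmetric bimodule. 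Thus $\iota$ identifies $C^\bullet(A,\Omega^d)$ with the subcomplex of $E^\bullet$ concentrated in top form-degree, where $\nabla$ acts trivially.

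Both complexes are complexes of fine sheaves of $C^\infty(M)$-modules, so it suffices to prove the quasi-isomorphism locally on a coordinate chart $U\cong\R^d$. On such a chart I would filter $E^\bullet|_U$ by form degree from above and consider the associated spectral sequence. The $E_0$-differential is $b$, and the $E_1$-page is $H^\bullet_b(D^\bullet)\otimes \Omega^q$, with $d_1$ induced by $\nabla$. The crucial local input is the computation of $H^\bullet_b(D^\bullet)$: the differential $b$ is the natural dual of the cyclic Hochschild chain differential $b_H$ on $C_\bullet(A,A) = A\otimes\bar A^{\otimes\bullet}$, and a Hochschild--Kostant--Rosenberg-type argument (combined with commutativity and the symbol calculus for multidifferential operators) should identify its cohomology with a suitable local model. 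Together with the induced $\nabla$, the spectral sequence collapses onto $\Omega^{d-\bullet}$ in top form-degree, which matches $H^\bullet(C^\bullet(A,\Omega^d)) \cong \Omega^{d-\bullet}$ via the van den Bergh duality recalled earlier.

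The main technical obstacle is the $H^\bullet_b(D^\bullet)$ computation, since $b$ is an unusual variant of the Hochschild differential with a cyclic wrap-around term replacing the standard boundary multiplications. A cleaner approach that sidesteps this is to realize $E^\bullet$ as $\Hom_{A^e,\,\mathrm{diff}}(\widetilde B_\bullet,\Omega^d)$ for an extended $A^e$-resolution $\widetilde B_\bullet$ of $A$ obtained by tensoring the normalized bar resolution with the de Rham complex, e.g.\ $\widetilde B_{n,q} = A\otimes\bar A^{\otimes n}\otimes\Omega^q\otimes A$ with combined bar and de Rham differentials. One then verifies, using the Poincar\'e lemma on the chart, that the natural augmentation $\widetilde B_\bullet \to B_\bullet(A) \to A$ from the extended to the usual normalized bar resolution is a quasi-isomorphism of $A^e$-resolutions. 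Applying $\Hom_{A^e}(-,\Omega^d)$ then identifies the map of $\Hom$-complexes with $\iota$ and produces the quasi-isomorphism without ever directly computing the cohomology of $(D^\bullet,b)$.
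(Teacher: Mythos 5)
Your first paragraph (the chain-map check: $\nabla$ kills top-degree values, and $b$ restricted to operators of order zero in the $a_0$-slot reproduces the Hochschild coboundary of the symmetric bimodule $\Omega^d$) is fine and corresponds to what the paper dismisses as ``more or less obvious''. The gap is that neither of your two routes actually establishes the acyclicity of the extra directions of $E^\bullet$ (higher-order dependence on the $a_0$-slot and form degrees $<d$), which is the entire content of the proposition. In your first route you filter by form degree and need $H^\bullet_b(D^\bullet)$; you yourself flag this as the main obstacle and only assert that an HKR-type argument ``should'' identify it with a local model --- that is not a proof, and this polydifferential dual-of-Hochschild-homology computation is exactly the nontrivial input. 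In your second route, the identification $E^\bullet\cong\Hom_{A^e,\mathrm{diff}}(\widetilde B_\bullet,\Omega^d)$ is asserted, not proved, and it is not automatic: to match $E^{n,q'}$ (operators of arbitrary order in $a_0$ with values in $\Omega^{q'}$) with $A^e$-linear polydifferential maps out of $A\otimes\bar A^{\otimes n}\otimes\Omega^{d-q'}\otimes A$ you need the formal-adjoint identification of differential operators $A\to\Omega^{q'}$ with differential operators $\Omega^{d-q'}\to\Omega^{d}$, and you must then verify that the cyclic wrap-around term of $b$ and the postcomposition differential $\nabla$ are reproduced by your ``combined bar and de Rham'' differential; with the most naive choice of that differential they are not.

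More seriously, even granting that identification, the step ``applying $\Hom_{A^e}(-,\Omega^d)$'' to the comparison $\widetilde B_\bullet\to B_\bullet(A)$ is precisely the non-formal step in this setting. $\Hom$ carries quasi-isomorphisms to quasi-isomorphisms only via homotopy equivalences (e.g.\ between projective resolutions), whereas here you are confined to the polydifferential Hom-complexes: the contracting homotopy behind your Poincar\'e-lemma argument is an integral operator, not a differential operator, so it does not dualize inside $\Hom_{A^e,\mathrm{diff}}$, and $\widetilde B_\bullet$ is not a projective resolution in any category where the comparison would be automatic. The natural repair is to filter by the order of the operator and compute the associated graded --- but that is exactly the computation you hoped to sidestep, and it is what the paper does: it filters $E^\bullet$ by the order of the differential operator in the $a_0$-slot, identifies the associated graded of the $\nabla$-complex with operators valued in $\wedge^\bullet T^*M\otimes S^\bullet TM$ carrying the Koszul differential $\sum_i(dx^i\wedge)\otimes(\p_i\cdot)$, and concludes that the $\nabla$-cohomology sits in form degree $d$ and order $0$, i.e.\ exactly in the image of the embedding, after which the (finite) spectral sequence comparison finishes the proof. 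That Koszul-type computation, in one guise or another, is the missing core of your argument.
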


We will benefit from the following elementary result.
\begin{lemma}
\label{lem:cohomhelper}
 Let $(K^{p,q})_{0\leq p\leq n, q\in \Z}$ be a
double complex with differential $d_1+d_2$, where
\begin{align*}
 d_1:K^{p,q}\to K^{p+1,q} &\quad\quad d_2:K^{p,q}\to K^{p,q+1}.
\end{align*}
Then the following holds:
\begin{enumerate}
 \item If the $d_1$-cohomology is concentrated in
bottom degree $p=0$, then the inclusion of the
$d_1$-closed, $p$-degree 0 elements
\[
 \{k\in K^{0,\bullet}\mid d_1k=0 \} \hookrightarrow K^{\bullet,\bullet}
\]
is a quasi\ndash isomorphism.
 \item If the $d_1$-cohomology is concentrated in
top degree $p=n$, then the projection onto the top
$p$ degree elements modulo exact elements
\[
 K^{\bullet,\bullet} \twoheadrightarrow K^{n,\bullet}/d_1 K^{n-1,\bullet}
\]
is a quasi\ndash isomorphism.
\end{enumerate}
\end{lemma}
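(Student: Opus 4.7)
The plan is to prove part (1) by an iterative ``peeling'' argument on the $p$-degree, and to observe that part (2) follows by the symmetric mirror argument. The underlying mechanism in both cases is the same: the hypothesis lets one solve $d_1\xi=\alpha$ whenever $\alpha$ is $d_1$-closed in a $p$-degree where $d_1$-cohomology vanishes, and this is enough to absorb unwanted components into $(d_1+d_2)$-coboundaries. (A spectral sequence proof is available too---the second filtration degenerates at $E_2$ by hypothesis, and convergence is immediate because the complex is bounded in $p$---but the direct peeling gives a more explicit homotopy and avoids introducing machinery.)

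For part (1), let $Z^q=\ker(d_1\colon K^{0,q}\to K^{1,q})$. Since $d_1$ and $d_2$ anticommute, $d_2$ restricts to a differential on $Z$, making $\iota$ a chain map. To check surjectivity on cohomology, take a total cocycle $\alpha=\alpha_0+\dots+\alpha_n$ with $\alpha_p\in K^{p,q-p}$. The top component $\alpha_n$ is $d_1$-closed for free (because $K^{n+1,\bullet}=0$), so by the hypothesis $\alpha_n=d_1\beta_{n-1}$ for some $\beta_{n-1}\in K^{n-1,q-n}$. Replacing $\alpha$ by $\alpha-(d_1+d_2)\beta_{n-1}$ kills $\alpha_n$, and the cocycle condition at the new top again forces $d_1$-closedness of the new top-nonvanishing component. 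After at most $n$ iterations one arrives at an element in $K^{0,q}$, and the residual cocycle condition at positions $(1,q)$ and $(0,q+1)$ forces it to lie in $Z$ and in $\ker d_2$. The analogous peeling applied to a bounding chain $\eta$ for $\iota(z)$ establishes injectivity on cohomology.

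Part (2) is dual: set $\bar K^{n,\bullet}=K^{n,\bullet}/d_1 K^{n-1,\bullet}$; since $d_2$ sends $d_1 K^{n-1,\bullet}$ into itself, $d_2$ descends to the quotient and the projection $\pi$ is a chain map. One now peels from the bottom up: the bottom component $\alpha_0$ of any total cocycle is $d_1$-closed, hence $d_1$-exact by hypothesis at $p=0$, so one can absorb it into a coboundary, continue with $\alpha_1$, and so on, leaving only a residue in $K^{n,\bullet}$ representing a class in $\bar K^{n,\bullet}$. The argument is entirely elementary and the only delicate point is bookkeeping of $p$-indices and signs; one should note that the extreme degree ($p=n$ in part (1), $p=0$ in part (2)) is handled for free by boundedness of the complex on that side, while the hypothesis is genuinely used at every intermediate degree.
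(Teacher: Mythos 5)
The paper offers no actual proof of this lemma (it calls part (1) ``familiar'' and part (2) ``dual''), so the only meaningful comparison is with the standard argument. Your part (1) is correct and is that standard argument: top-down peeling of a total cocycle, using $K^{n+1,\bullet}=0$ for closedness of the top component and the vanishing of the $d_1$-cohomology in degrees $p\geq 1$ for exactness, followed by the same peeling applied to a bounding chain for injectivity. The final step, reading off $d_1\alpha_0'=0$ and $d_2\alpha_0'=0$ from the components in $K^{1,q}$ and $K^{0,q+1}$, is also right.

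Your sketch of part (2), however, has a genuine gap. The bottom component $\alpha_0$ of a total cocycle $\alpha$ is \emph{not} $d_1$-closed: the component of $(d_1+d_2)\alpha$ in $K^{0,q+1}$ gives $d_2\alpha_0=0$, while the component in $K^{1,q}$ gives $d_1\alpha_0=-d_2\alpha_1$, which need not vanish. Moreover, the hypothesis at $p=0$ reads $H^0_{d_1}=\ker\bigl(d_1\colon K^{0,\bullet}\to K^{1,\bullet}\bigr)=0$ (there is no $K^{-1,\bullet}$ to provide primitives), so a $d_1$-closed element in $p$-degree $0$ is simply zero; and in any case absorbing $\alpha_0$ into a total coboundary would require $\alpha_0\in d_2K^{0,\bullet}$, i.e.\ $d_2$-exactness, which the hypothesis does not supply. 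The correct argument for (2) also proceeds \emph{downward} from $p=n$. For surjectivity: given $\kappa\in K^{n,q-n}$ with $d_2\kappa\in d_1K^{n-1,\bullet}$, choose $\alpha_{n-1}$ with $d_1\alpha_{n-1}=-d_2\kappa$, then successively solve $d_1\alpha_{p-1}=-d_2\alpha_p$ for $p=n-1,\dots,1$ (the right-hand side is $d_1$-closed and sits in $p$-degree $\leq n-1$, where $H^p_{d_1}=0$), and finally conclude $d_2\alpha_0=0$ because $d_2\alpha_0$ is $d_1$-closed in $K^{0,\bullet}$, where $\ker d_1=0$. For injectivity: subtract a total coboundary to kill the $p=n$ component (using the witness of $[\alpha_n]\in \bar d_2 H(\bar K)$), then peel downward exactly as in part (1), ending with an element of $\ker(d_1)\cap K^{0,\bullet}=0$. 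So the statement is true and the overall strategy is salvageable, but the mechanism you describe for (2) is not the one that works.
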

\begin{proof}
 At least the first statement is probably familiar
to the reader. The proof of the second statement is
essentially dual to the proof of the first.
\end{proof}

\begin{proof}[Proof of Proposition \ref{prop:Eqiso}]
It is more or less obvious that the above map is a
map of complexes. It remains to be shown that it
is a quasi\ndash isomorphism.

Let us compute the cohomology of $E^\bullet$ wrt.
$\nabla$, i.e., the first term in the spectral
sequence associated $E^\bullet$. We claim that it
is concentrated in the top form-degree $d=\dim M$,
and every class has exactly one representative in
the image of the above quasi\ndash isomorphism. To show
this, consider the spectral sequence associated to
the filtration on multidifferential operators by
the degree in the first ``slot'' (i.e., the slot
in which $a_0$ is inserted). The first term in this
spectral series is the associated graded, i.e.,
multidifferential operators with values in
$\wedge^\bullet T^*M \otimes S^\bullet TM$.
The differential $d_0$ is, in local coordinates,
the operator $d_0 = \sum_i(dx_i\wedge )\otimes (\p_i \cdot)$,
multiplying the $\wedge^\bullet T^*M$-part by
$dx^i$ and the $S^\bullet TM$-part by $\p_i$.
The cohomology is concentrated in form degree $d$
and operator degree $0$. Probably the quickest way
to see this is to note that the complex
$\wedge^\bullet T^*M \otimes S^\bullet TM$ with
the above differential is isomorphic to the
Koszul complex of $S^\bullet TM$, the isomorphism
being given by contracting the first factor
with a section of $\wedge^d TM$. The spectral
sequence degenerates at this point by (form\ndash)degree
reasons. This means that any $\nabla$-cohomology
class has exactly one representative of form degree
$d$ and of differential operator degree $0$ in the first slot.
 This proves the above claim, and hence the proposition.
\end{proof}

\subsection{Cyclic Cochains -- sheaf version (module)}

\begin{defi}
The extended cyclic complex is the complex
$(E^\bullet)^\sigma$ of invariants under the
cyclic action. The extended cyclic $(b,B)$-complex
is the complex $E^\bullet_{norm}[[u]]$ with differential
$b+uB$, where $B$ is Connes' $B$.
\end{defi}

For an orientable manifold, this complex computes the cyclic cohomology.

\begin{prop}
For $M$ orientable, the cohomology of the extended cyclic complexes $(E^\bullet)^\sigma$ and $E^\bullet_{norm}[[u]]$ is the cyclic cohomology of $C^\infty(M)$.
\end{prop}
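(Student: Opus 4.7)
The plan is to bootstrap from Proposition \ref{prop:Eqiso}, which furnishes a quasi-isomorphism $C^\bullet(A,\Omega^d(M)) \to E^\bullet$ sending $\Phi$ to the multidifferential operator $(a_0,\ldots,a_n) \mapsto a_0 \Phi(a_1,\ldots,a_n)$, and to upgrade it to a quasi-isomorphism of the respective cyclic versions. First I would verify that this embedding is equivariant for the cyclic operator $\sigma$: the natural cyclic structure on $C^\bullet(A,\Omega^d)$ coming from the $A$-bimodule structure on $\Omega^d$ corresponds under the embedding to the operator $\sigma$ defined on $E^\bullet$. Granting this, I pass to $\sigma$-invariants. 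Since in degree $n$ the operator $\sigma$ generates the finite group $\Z/(n+1)\Z$ and we work over $\C$, the averaging projection $\frac{1}{n+1}\sum_{k=0}^{n}\sigma^k$ splits each term as $(E^\bullet)^\sigma$ plus its kernel and commutes with the total differential $b+\nabla$. Hence the quasi-isomorphism of Proposition \ref{prop:Eqiso} restricts to a quasi-isomorphism $C^\bullet(A,\Omega^d)^\sigma \to (E^\bullet)^\sigma$, and by the classical identification (integration against the volume form pairs $\Omega^d$ with $\C$) the left-hand side is Connes' cyclic cochain complex computing the cyclic cohomology of $C^\infty(M)$.

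For $E^\bullet_{norm}[[u]]$ I would proceed analogously; the key step is to check that Connes' operator $B = (1-\sigma)sN$ on $E^\bullet_{norm}$ --- built from $\sigma$, the extra degeneracy $s$ (insertion of $1$ in the zeroth slot), and the norm $N = \sum_{k=0}^{n}\sigma^k$ --- corresponds under the embedding to the usual Connes $B$ on the normalized cochain complex with values in $\Omega^d$. The quasi-isomorphism of Proposition \ref{prop:Eqiso} then extends $u$-linearly to a quasi-isomorphism of mixed complexes, so $E^\bullet_{norm}[[u]]$ with differential $b+uB$ has the same cohomology as the analogous $(b+uB)$-complex on $C^\bullet(A,\Omega^d)$, which by standard mixed-complex homological algebra is a model for the cyclic cohomology of $C^\infty(M)$.

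The principal obstacle is the bookkeeping of signs and indices in these two compatibility checks (for $\sigma$ and for $B$): conceptually routine but concretely delicate. A useful simplification I would exploit is that, thanks to the proof of Proposition \ref{prop:Eqiso}, every $\nabla$-cohomology class has a representative of top form degree and first-slot differential order zero, so the de Rham piece $\nabla$ of the differential on $E^\bullet$ contributes nothing to these comparisons and everything reduces to statements on the classical Hochschild cochain complex with values in $\Omega^d(M)$.
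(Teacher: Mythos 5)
Your key step fails at the chain level: the embedding $\Phi\mapsto\bigl((a_0,\dots,a_n)\mapsto a_0\Phi(a_1,\dots,a_n)\bigr)$ of Proposition \ref{prop:Eqiso} is \emph{not} equivariant for $\sigma$. Its image consists of the cochains of top form degree whose differential order in the zeroth slot is zero, and this subspace is not preserved by $\sigma$. For example, on $M=\R$ take $\Phi(a_1)=a_1'\,dx$; then $\tilde\Phi(a_0,a_1)=a_0a_1'\,dx$, while $(\sigma\tilde\Phi)(a_0,a_1)=-a_1a_0'\,dx$ differentiates $a_0$ and so is not of the form $a_0\Phi'(a_1)$. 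The cyclic operator you want on $C^\bullet(A,\Omega^d)$ is the formal-adjoint one, obtained from the integration pairing by moving all derivatives off $a_0$ (the bimodule structure alone gives no cyclic structure); under the embedding it agrees with $\sigma$ only modulo $\nabla$-exact terms --- in the example $\tilde\Phi-\sigma\tilde\Phi=\nabla\bigl((a_0,a_1)\mapsto a_0a_1\bigr)$. Consequently the embedding does not restrict to a map into $(E^\bullet)^\sigma$, nor does it intertwine $B=(1-\sigma)sN$, so the averaging/restriction argument never gets started. (This failure is exactly the reason the extended complex $E^\bullet$ is introduced: the cyclic group acts strictly on it, whereas on $C^\bullet(A,\Omega^d)$ it acts only through adjoints.)

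The repair is the observation you relegate to your final paragraph, but it must carry the whole argument rather than just the sign bookkeeping --- and it is the paper's proof. Filter by form degree and compute $\nabla$-cohomology first: since $\nabla$ commutes with $\sigma$ (hence with $B$) and with $b$, and in each degree $\sigma$ generates a finite group acting on complexes of $\C$-vector spaces, taking $\nabla$-cohomology commutes with taking $\sigma$-invariants. By the Koszul-complex computation in the proof of Proposition \ref{prop:Eqiso}, the $\nabla$-cohomology is concentrated in top form degree with unique normal-form representatives $a_0\Phi(a_1,\dots,a_n)$, and the operators induced there by $b$, $\sigma$ and $B$ are precisely the Hochschild differential, the adjoint cyclic permutation and Connes' $B$ of the standard (normalized) cyclic cochain complex of $C^\infty(M)$; your chain-level compatibility checks are true only at this stage, after adding the $\nabla$-exact corrections that restore the normal form. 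Since the form-degree filtration has finite length, the spectral sequence (also for the $u$-linear $b+uB+\nabla$ complex) degenerates for degree reasons, giving the cyclic cohomology of $C^\infty(M)$ for both $(E^\bullet)^\sigma$ and $E^\bullet_{norm}[[u]]$.
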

\begin{proof}
Consider again the spectral sequence and compute the
$\nabla$-cohomology of the two complexes. As in the
last proof, the first term of the spectral sequence
for $E^\bullet_{norm}[[u]]$ is, as a vector space,
isomorphic to $D^\bullet_{norm}[[u]]$, the isomorphism
being given in the last proposition. One can see more
or less by the definitions that the differentials $b,B$
are mapped to $b_H,B$ under this isomorphism.

For the case of $(E^\bullet)^\sigma$, note that $\nabla$
commutes with the action of the cyclic group. It follows
that taking the $\nabla$-cohomology commutes with taking
cyclic invariants. The result then follows as in the
proof of the last proposition.
\end{proof}

\section{Part II: The meaning of the PSM morphism}
\label{sec:partII}
\subsection{The original PSM morphism}
Let $M$ be orientable and choose a volume form $\Omega$.
The original PSM morphism $\mV_{PSM,orig}$
is an $L_\infty$-morphism of modules over
$(T^\bullet(M)[[u]], u \dv_\Omega, \co{\cdot}{\cdot}_S)$,
constructed by the first two authors in \cite{cafe}
using essentially an equivariant version of the
Poisson sigma model. The two modules it relates
are the cyclic chains and the multivector fields.
\[
 \mV_{PSM,orig}: (C_\bullet(A,A)[[u]],b+uB) \to (T^\bullet(M)[[u]], u \dv_\Omega).
\]
The module structure on the left is given by pulling
back the $C^\bullet(A)$-action along $\mU_K$.
The module structure on the right is the trivial
module structure (!). We copy the following proposition
from \cite{cafe}

\begin{prop}
\label{prop:CF}
The morphism $\mV_{PSM,orig}$ is a morphism of
$L_\infty$-modules (but not a quasi\ndash isomorphism).
\end{prop}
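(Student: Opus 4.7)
The plan is to prove the two assertions separately, following the Feynman-graph/Stokes'-theorem strategy that is standard for Kontsevich-type formality morphisms, adapted here to the equivariant Poisson Sigma Model. First I would recall from \cite{cafe} that each Taylor component of $\mV_{PSM,orig}$ is a sum, indexed by decorated graphs, of integrals over compactified configuration spaces of points in the disk. The $S^1$-equivariance of the model promotes these weights to equivariant differential forms on configuration space, and the formal parameter $u$ tracks the equivariant cohomology degree.

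The $L_\infty$-module morphism equation, schematically of the form
\[
\mV_{PSM,orig}\bigl((b+uB+L)\,\cdot\bigr) \;=\; (u\,\dv_\Omega)\,\mV_{PSM,orig}(\,\cdot\,) \;+\; \mV_{PSM,orig}\bigl([\,\cdot\,,\,\cdot\,]_S\,\cdot\bigr)
\]
(the triviality of the target module structure accounts for the absence of an ``action-on-the-target'' term) is verified by Stokes' theorem applied to each configuration-space integral. Since the equivariant PSM satisfies the equivariant classical master equation, the integrand is an equivariantly closed form on configuration space, and the integral of its derivative localises on the boundary. The relevant boundary strata split into four kinds: collisions of two interior (Poisson) vertices produce Schouten brackets $[\gamma_i,\gamma_j]_S$; collisions of consecutive boundary (chain) vertices produce the Hochschild differential $b$; collisions of an interior vertex with the boundary produce the Hochschild action pulled back via $\mU_K$; and the $S^1$-equivariant strata supply the $uB$ term together with the divergence term $u\,\dv_\Omega\gamma$ coming from the action of the equivariant parameter on the integrand. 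Matching these contributions with signs reproduces the $L_\infty$-module morphism relation.

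The failure of being a quasi-isomorphism is essentially automatic: the target $T^\bullet(M)[[u]]$ is equipped with the \emph{trivial} module structure over itself, whereas the source carries a genuinely non-trivial action of multivectors on Hochschild chains, transported through $\mU_K$. Already for the zero Maurer--Cartan element on a manifold with non-trivial de Rham cohomology, the induced map on cohomology cannot reconcile these two structures, because the target is cohomologically too small to accommodate the source action. This mismatch is precisely what motivates the reinterpretation $\mV_{PSM}^*:T^\bullet[[u]]\cong VT^\bullet[[u]]\to E^\bullet[[u]]$ carried out in the remainder of the paper.

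The main obstacle I anticipate is the careful bookkeeping of boundary strata, their signs and multiplicities, and in particular the interplay between the stratum producing the Connes $uB$ term and the one producing the divergence $u\,\dv_\Omega\gamma$: this balance is what forces the target module structure to be trivial and is the technical heart of \cite{cafe}. Once all boundary contributions are matched, the $L_\infty$-module morphism relation drops out directly.
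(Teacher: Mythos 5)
The paper itself offers no proof of this proposition: it is quoted verbatim from \cite{cafe}, so there is no internal argument to compare yours against, and the burden of ``proving'' it here is really the burden of reproducing the configuration-space computation of that reference. Your outline is a plausible reconstruction of that strategy (Taylor coefficients as integrals over compactified configuration spaces on the disk, equivariant closedness from the equivariant master equation, Stokes' theorem, boundary strata giving $\co{\cdot}{\cdot}_S$, $b$, the $\mU_K$-pulled-back action, and the $uB$ and $u\dv_\Omega$ terms). But as written it is a plan, not a proof: the entire content of the statement is the stratum-by-stratum identification, the vanishing of the remaining strata, the multiplicities and the signs --- precisely the part you explicitly defer as ``the main obstacle I anticipate.'' In particular, nothing in your text verifies the one genuinely surprising feature, namely that the equivariant boundary contributions conspire so that the target carries the \emph{trivial} module structure (rather than, say, the adjoint one); asserting that ``matching these contributions with signs reproduces the relation'' is exactly the claim to be proved.

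The second assertion is argued incorrectly. Whether $\mV_{PSM,orig}$ is a quasi-isomorphism is a question about its zeroth Taylor component inducing an isomorphism between $H\bigl(C_\bullet(A)[[u]],b+uB\bigr)$ and $H\bigl(T^\bullet(M)[[u]],u\dv_\Omega\bigr)$; the triviality of the target module structure and the ``mismatch of actions'' you invoke are irrelevant to this, since module structures play no role in whether a single map of complexes is a quasi-isomorphism. A correct and easy argument is cohomological: by the cyclic formality theorem quoted in the paper, the source computes the cohomology of $(\Omega^{-\bullet}(M)[[u]],u\,{\rm d})$, which is nonzero in degree $-d$ (it contains $\Omega^d(M)$ there), whereas the target, isomorphic via $\gamma\mapsto\iota_\gamma\Omega$ to $(\Omega^{d-\bullet}(M)[[u]],u\,{\rm d})$, is concentrated in nonnegative degrees; so for $d>0$ no degree-zero morphism between them can be a quasi-isomorphism. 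Both halves of your proposal therefore need to be replaced by actual verifications (or, as the paper does, by an explicit citation of \cite{cafe} for the first half).
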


\subsection{The (reinterpreted) PSM morphism $\mV^*_{PSM}$}
Here we give a new interpretation of the above morphism
The (reinterpreted) PSM morphism $\mV^*_{PSM}$
is a quasi\ndash isomorphism of $L_\infty$-modules over the dgla
$(T^\bullet(M)[[u]], u \dv_\Omega, \co{\cdot}{\cdot}_S)$. However,
the two modules are the multivector\ndash field\ndash valued top forms,
which can be identified with $T^\bullet(M)[[u]]$ using the
volume form, and the extended cyclic complex $E_{norm}^\bullet[[u]]$.
\[
 \mV^*_{PSM} : (T^\bullet(M)[u],u\dv) \cong (VT^\bullet(M)[u], ud) \to (E_{norm}^\bullet[u], \nabla+b+uB).
\]
The dgla module structure on the very left is the adjoint
one, in contrast to the trivial one above, and on the
middle $L^{(1)}$. The $L_\infty$-module
structure on the right is defined via pullback of the
dgla action of $C^\bullet(A)$ via the (Kontsevich)
$L_\infty$-morphism $\mU^{(0)}$.

The reinterpreted morphism is constructed from the
original one as follows:
\[
 \mV^*_{PSM}(\gamma_1,..,\gamma_m)(\gamma)(a_0,..,a_n)
=
\iota_{\mV_{PSM,orig}(\gamma_1,..,\gamma_m,u\gamma;a_0,..,a_n)}\Omega.
\]

\begin{thm}
 The morphism $\mV^*_{PSM}$ is a quasi\ndash isomorphism of $L_\infty$-modules.
\end{thm}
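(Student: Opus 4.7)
The approach splits into two parts: (i) verifying the $L_\infty$-module morphism identities for $\mV^*_{PSM}$, and (ii) showing it is a quasi-isomorphism. I expect (i) to be the main obstacle, since it requires delicate bookkeeping to convert the $L_\infty$-identity of $\mV_{PSM,orig}$ into the one required with new module structures on both sides.

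For (i), I would unpack the $L_\infty$-identity for $\mV_{PSM,orig}$ given by Proposition \ref{prop:CF}, evaluated on the arguments $(\gamma_1,\ldots,\gamma_m,u\gamma)$ and the chain $a_0\otimes\cdots\otimes a_n$, then contract the result with $\Omega$. That identity involves the chain differential $b+uB$, the internal differential $u\dv_\Omega$ on each multivector slot (including $u\gamma$), and Schouten brackets between pairs of slots; on the target $T^\bullet[[u]]$ the module action is trivial and the differential is $u\dv_\Omega$. Three translations then occur upon contraction with $\Omega$. First, the internal differential applied to the singled-out slot $u\gamma$ yields $\iota_{u\dv_\Omega(u\gamma)}\Omega = u^2\,{\rm d}\iota_\gamma\Omega$, reproducing the $u{\rm d}$ differential on $VT^\bullet[[u]]$ together with the $\nabla$-part of $b+\nabla+uB$. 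Second, the Schouten bracket $[\gamma_i,u\gamma]_S$ contracted with $\Omega$ yields, via $\iota_{[\gamma_i,\gamma]_S}\Omega=[L_{\gamma_i}+\iota_{\dv_\Omega\gamma_i},\iota_\gamma]\Omega$, precisely the $L^{(1)}_{\gamma_i}$ adjoint action on $VT^\bullet[[u]]$ as described in section \ref{sec:diffforms}. Third, the remaining terms, where no bracket or differential touches the $u\gamma$ slot, translate into the pullback Kontsevich action on $E^\bullet_{norm}[[u]]$. A formal but sign-sensitive manipulation then completes the verification.

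For (ii), I would filter both sides by powers of $u$ and pass to the associated graded, where it suffices to check that the induced map is a quasi-isomorphism. At $u=0$ the source becomes $VT^\bullet$ with zero differential, and the target becomes $(E^\bullet_{norm},b+\nabla)$, whose cohomology is that of $C^\bullet(A,\Omega^d)$ by Proposition \ref{prop:Eqiso} and hence equal to $VT^\bullet$ by the Hochschild--Kostant--Rosenberg theorem. The map induced on cohomology is controlled by the Taylor component of $\mV_{PSM,orig}$ linear in the singled-out slot and paired with $\Omega$; by the explicit PSM computation this reduces to the canonical HKR contraction and is therefore an isomorphism. Alternatively, invoking Propositions \ref{prop:Ltqiso} and \ref{prop:Eqiso} one can factor $\mV^*_{PSM}$ through the adjoint Shoikhet/Tsygan formality morphism, transporting its known quasi-isomorphism property.
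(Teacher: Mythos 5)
Your proposal follows the paper's strategy in substance, with one organizational difference in the second half. For the morphism property your plan---unpack the $L_\infty$-identity of Proposition \ref{prop:CF} on arguments $(\gamma_1,\dots,\gamma_m,u\gamma)$ and contract with $\Omega$, using $\iota_{\dv_\Omega\nu}\Omega={\rm d}\iota_\nu\Omega$ to convert the divergence into $\nabla$ and the bracket terms into the $L^{(1)}$ (equivalently, pushed-forward adjoint) action---is exactly the paper's argument, just spelled out; the paper dismisses it as an ``easy consequence'' and hides the signs in the appendix. For the quasi-isomorphism property you filter by powers of $u$ and reduce to the associated graded, where the target is $(E^\bullet_{norm},b+\nabla)$ and you invoke Proposition \ref{prop:Eqiso} plus classical HKR; the paper instead works with the full $u$-dependent complexes and uses part (2) of Lemma \ref{lem:cohomhelper} to compose with the projection onto top form degree modulo $\nabla$-exact terms, then quotes the cyclic HKR quasi-isomorphism. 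Your route buys a reduction of the cyclic statement to the Hochschild one (at the cost of a standard completeness argument for the $u$-adic filtration); the paper's route is shorter because the projection immediately identifies the composite with HKR.

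One step in your part (ii) is too quick: the explicit computation of the $0$-th Taylor component gives the HKR map \emph{plus lower form-degree corrections}, and to conclude that the induced map on cohomology is the HKR isomorphism you must dispose of those corrections. Proposition \ref{prop:Eqiso} alone does not do this, since its quasi-isomorphism is the embedding $C^\bullet(A,\Omega^d)\hookrightarrow E^\bullet$ and the PSM image does not lie in that subcomplex; what you need is the complementary statement, i.e.\ the projection quasi-isomorphism of Lemma \ref{lem:cohomhelper}(2) (or, equivalently, the uniqueness of top-degree representatives established in the proof of Proposition \ref{prop:Eqiso}), under which the lower form-degree terms are killed. This is precisely how the paper argues, so the fix is available but should be stated. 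Finally, your suggested alternative---factoring $\mV^*_{PSM}$ through the Shoikhet/Tsygan morphism via Propositions \ref{prop:Ltqiso} and \ref{prop:Eqiso}---is not a proof as stated: nothing guarantees a priori that $\mV^*_{PSM}$ factors (even up to homotopy) through that morphism, and establishing such a comparison is essentially the content of the paper rather than an input to this theorem.
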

\begin{proof}
 The fact that it is an $L_\infty$\ndash morphism is an easy
consequence of Proposition \ref{prop:CF} and the previous
observation that for any multivector field $\nu$
\[
 \iota_{\dv \nu} \Omega = d \iota_\nu \Omega.
\]

It remains to be shown that the zero-th Taylor component
is an isomorphism on cohomology. In view of Lemma \ref{lem:cohomhelper}
it is sufficient to show that the composition with the
projection onto the top form degree part modulo the image of
$\nabla$ is a quasi\ndash isomorphism. Explicit computation yields
that the 0-th Taylor component is
\[
 \gamma \mapsto \pm ( (a_0,..,a_k) \mapsto a_0\gamma(a_1,..,a_k)\Omega)+ (\text{lower form degree}).
\]
The first part is the HKR morphism, known to be a quasi\ndash isomorphism,
and the remainder does not matter due to the projection onto top
form degree components.
\end{proof}

\appendix

\section{Our signs conventions}
There are many signs involved in the discussions above. Since sign computations are typically lengthy and boring, we did not explain them all. However, we list here the underlying conventions for the reader who believes $1\neq -1$ and wants to check.

Let $\alg{g}^\bullet$ be a graded vector space. An $L_\infty$-algebra structure on $\alg{g}^\bullet$ is a degree 1 coderivation $Q$ on the cofree (graded) cocommutative coalgebra without counit cogenerated by $\alg{g}^{\bullet+1}$, i.e. $S^+\alg{g}^{\bullet+1}$, satisfying $Q^2=0$. Any such coderivation is determined by its Taylor coefficients
\[
Q_n(x_1,\dots,x_n) = \pi Q(x_1,\dots,x_n)
\]
where $\pi$ is the projection on $\alg{g}^{\bullet+1}\subset S^+\alg{g}^{\bullet+1}$. If $\alg{g}$ carries the structure $(d,\co{\cdot}{\cdot})$ of a dgla, we associate to it an $L_\infty$-structure by the following convention (others are possible)
\begin{align*}
Q_1(x) &= dx & Q_2(x_1,x_2)=-(-1)^{|x_1|}\co{x_1}{x_2}\, .
\end{align*}

An $L_\infty$-module structure on the graded vector space $M^\bullet$ is a coderivation $\tilde{Q}$ lifting $Q$ on the cofree comodule $S\alg{g}^{\bullet+1}\otimes M^\bullet$. Again, it is determined by its Taylor coefficients $\pi_M\circ \tilde{Q}$. We identify (by convention) a dgla module $(M^\bullet,\delta,L)$ over the dgla $\alg{g}$ with the $L_\infty$\ndash module
\begin{align*}
 \tilde{Q}_0(m) &= \delta m & \tilde{Q}_1(x;m)=-(-1)^{|x_1|}L_x m\, .
\end{align*}

Next let $\hat{M}^\bullet$ be another graded vector space and $\skp{\cdot}{\cdot}$ be a nondegenerate pairing between $\hat{M}^\bullet$ and $M^\bullet$. This allows us to endow $\hat{M}^\bullet$ with an $L_\infty$\ndash structure $\tilde{Q}^*$ defined by
\[
\skp{\tilde{Q}^*_n(x_1,\dots,x_n;\hat{m})}{m}=-(-1)^{|\hat{m}|(n+1+\sum_j|x_j|)}\skp{\hat{m}}{\tilde{Q}_n(x_1,\dots,x_n;m)}\, .
\]

Let $M^\bullet,N^\bullet$ be $L_\infty$\ndash modules. A morphism $\phi$ between them is a degree zero morphism of the comodules intertwining the coderivations. It is also determined by the Taylor coefficients $\pi_N\phi$.
Let $\hat{N}^\bullet,\hat{M}^\bullet$ be $L_\infty$\ndash modules, with the module structure determined by nondegenerate pairings as above. Then one can define an adjoint morphism $\phi^\star$ from $\hat{N}$ to $\hat{M}$ by the formula
\[
\skp{\phi_n^*(x_1,\dots,x_n;\hat{n})}{m} = (-1)^{|\hat{m}|(n+\sum_j|x_j|)} \skp{\hat{n}}{\phi_n(x_1,\dots,x_n;m)}\, .
\]

Finally, let us describe the signs involved in section \ref{sec:partII}. Let $Q$ be the coderivation determining the $L_\infty$\ndash algebra structure on $T^\bullet(M)[[u]]$. Then the (adjoint) $L_\infty$\ndash module structure on  $T^\bullet(M)[[u]]$ is simply given by
\[
 \tilde{Q}_n(x_1,\dots,x_n;x)=Q_{n+1}(x_1,\dots,x_n;x).
\]
Let $\tilde{P}$ determine the $L_\infty$ module structure on $C_\bullet(A,A)[[u]]$. Then the module structure on $E_{norm}^\bullet[[u]]$ is determined by the coderivation $\tilde{O}$, defined such that for a map $\lambda: C_\bullet(A,A)[[u]]\to T^\bullet(M)[[u]]$:
\[
 \tilde{O}_n(x_1,\dots,x_n;\iota_{\lambda(\cdot)} \Omega)
=
-(-1)^{|\lambda|(n+1+\sum_j|x_j|)}
\iota_{\lambda(\tilde{P}_n(x_1,\dots,x_n;\cdot))} \Omega
+\delta_{n,0} \nabla \iota_{\lambda(\cdot)} \Omega\, .
\]

\nocite{*}
\bibliographystyle{plain}
\bibliography{cfmorph}

\end{document}